\newtheorem{theorem}{Theorem}[section]
\newtheorem{definition}{Definition}[section]
\newtheorem{lemma}{Lemma}[section]
\newtheorem{proposition}{Proposition}[section]
\newtheorem{remark}{Remark}[section]
\newcommand{\C}{{\mathbb C}}
\newcommand{\R}{{\mathbb R}} %%reals
\newcommand{\N}{{\mathbb N}}
\newcommand{\la}{{\lambda}}
\newcommand{\p}{{\partial}}
\newcommand{\s}{{\sigma}}
\newcommand{\ddb}{{\partial \bar \partial}}
\newcommand{\trace}{\text{trace}}
\begin{document}

%\tableofcontents
%\newpage\`
%\tableofcontents
%\newpage

\title[Nonsmooth viscosity solutions of complex Hessian PDEs]
{Nonsmooth viscosity solutions of elementary\\
 symmetric functions of the complex Hessian}
%\footnote{ho dovuto accorciare il titolo nella testatina. Vi va bene così?}

\author[Chiara Guidi, Vittorio Martino  \& Annamaria Montanari ]{Chiara Guidi, Vittorio Martino \& Annamaria Montanari}
\address{Dipartimento di Matematica, Universit\`a di Bologna,
piazza di Porta S.Donato 5, 40127 Bologna, Italy.}
\email{chiara.guidi9@studio.unibo.it,
vittorio.martino3@unibo.it,
\newline
annamaria.montanari@unibo.it}
%\thanks{\today. }

\vspace{5mm}
\begin{abstract}

In this paper we prove the existence of nonsmooth
viscosity solutions for Dirichlet problems involving 
elementary symmetric functions of the
eigenvalues of the complex Hessian.
\vspace{5mm}

  {\it
  \noindent
  2010
 MSC:}  32U05, 35D40, 35J70.
 %\footnote{ 35J96  	Elliptic Monge-Ampère equations
 %35D40  	Viscosity solutions
 %35J70  	Degenerate elliptic equations
 %32U05  	Plurisubharmonic functions and generalizations }

\vspace{1mm}
  \noindent
{\it Keywords and phrases.} Fully nonlinear elliptic PDE's. Elementary symmetric functions of the
eigenvalues of the Complex Hessian. Comparison
principle. Gradient estimates. Pogorelov counterexample
\end{abstract}
\maketitle

%\tableofcontents
%%%%%%%%%%%%%%%%%%%%%%%%%%%%%%%%%%%%%%%%%%%%%%%%%%%%%%%%%%%%%%%%%%%%%%%%%%%%%%%%%%%%%%%%%%%%%%%%%%%%%%%%%%%%%
%%%%%%%%%%%%%%%%%%%%%%%%%%%%%%%%%%%%%%%%%%%%%%%%%%%%%%%%%%%%%%%%%%%%%%%%%%%%%%%%%%%%%%%%%%%%%%%%%%%%%%%%%%%%%

\section{Introduction}
\noindent
Let $2\leq k\leq n$. Here we will study the following equation
\begin{equation}\label{sigmak}
\big(\sigma_k(\partial \bar\partial u)\big)^{\frac{1}{k}} = f(z,u,D u)
\end{equation}
where, $\partial \bar\partial u$ is the complex Hessian of $u$ in $\C^n,$ $Du$ is the Euclidean gradient of $u$
and
for every Hermitian $n\times n$ matrix $M$, $\sigma_k(M)$ denotes the $k$-th elementary symmetric function of the eigenvalues of $M$ and $f$ is a positive function.\\

The complex Hessian equation in domains of $\C^n$ was considered by S.-Y. Li  in
\cite{[2004]},
where the author proves existence and uniqueness theorems for the Dirichlet problem for
elliptic nonlinear partial differential equations that are concave symmetric functions of
the eigenvalues of the complex Hessian and where the right hand side $f$ only depends on $z.$ The results are generalizations of those first
established by L. A. Caffarelli, L. Nirenberg and J. Spruck \cite{CNS} and later extended by several authors.

The complex Monge-Amp\`ere equations have been investigated extensively over last
years: we refer the reader to  \cite{K1998}, \cite{P2012} ,\cite{N2014}, \cite{D2014},  and references therein.

In this paper we will show that for any given smooth function $f$, under some suitable structural assumptions (see later), there always exist a small  Euclidean ball $B_r$ and a Lipschitz viscosity solution of (\ref{sigmak}) in $B_r$, which is not of class $C^1(B_r)$ if $k=2$, and it is not of class $C^{1,\beta}(B_r)$, with $\beta>1-\frac{2}{k}$, if $k>2$. 
%\footnote{ho sostituito $n$ con $k$}
\\

\noindent
Let us fix some notations: we identify $\mathbb{C}^{n}\approx\mathbb{R}^{2n}$, with $z=(z_1,\ldots,z_{n}), z_j=x_j+iy_j\simeq (x_j,y_j)$, for $j=1,\ldots,n$ and we set
$$u_{z_j}=\frac{\p u}{\p z_j}=\frac{1}{2}\Big(\frac{\p u}{\p x_j}-i\frac{\p u}{\p y_j}\Big)=\frac{1}{2}(u_{x_j}-i u_{y_j}), \qquad u_{\bar j}=\overline{u_j},$$
$$u_{j\bar \ell}=\frac{\p^2 u}{\p z_j \p \bar z_\ell}=\frac{1}{4}\Big\{\big(\frac{\p^2 u}{\p x_j \p x_l}+\frac{\p^2 u}{\p y_j \p y_l}\big)+i\big(\frac{\p^2 u}{\p x_j \p y_l}-\frac{\p^2 u}{\p y_j \p x_l}\big)\Big\}=$$
$$=\frac{1}{4}\Big\{\big(u_{x_j x_l}+ u_{y_j y_l}\big)+i\big(u_{x_j y_l}-u_{y_j x_l}\big)\Big\}$$
Now we define the following $n\times 2n$ matrix
$$J:=\frac{1}{2}\left( I_n, -iI_n\right)$$
and we observe that
$$\partial u =(u_{z_1},\ldots,u_{z_n})=J \; (u_{x_1},\ldots,u_{x_n},u_{y_1},\ldots,u_{y_n})=J\; Du$$
$$\partial \bar \partial u = J \, D^2u \, \bar J^t$$
So, let $\Omega$ be a bounded open set in $\mathbb{C}^n$, we will consider the following Dirichlet problem:
\begin{equation}\label{fullynonlinearliedirichlet}
\left\{
\begin{array}{ll}
    F(z,u,Du,\ddb u)=0, & \hbox{in}\, \Omega, \\
    u=\phi, & \hbox{on} \,\partial \Omega,\\
\end{array}
\right.
\end{equation}
where
\begin{equation}\label{F}
F(z,u,Du,\partial \bar\partial u):=-\big(\sigma_k\big(\partial \bar\partial u\big)\big)^{\frac{1}{k}}+ f(z,u,Du)
\end{equation}
and $\phi$ is a continuous function defined on $\partial \Omega$.\\
We pointed out in the previous definition of $F$ the dependence on the complex Hessian $\ddb u$; anyway we see that this is equivalent to 
%\footnote{la chiamiamo ancora $F$? A volte non si capisce quale delle 2 si sta usando}
$$F_J(z,u,Du,D^2u):=F(z,u,Du,J\,D^2u\, \bar J^t)=-\big(\sigma_k\big(J\,D^2u\, \bar J^t\big)\big)^{\frac{1}{k}}+ f(z,u,Du)$$
\noindent
Since the equation (\ref{sigmak}) is not elliptic in general, we need to give the definition of some suitable cones as in \cite{CNS}. First we define the open cone
$$\Gamma_k=\{\lambda=(\lambda_1, \dots, \lambda_n)\in \mathbb{R}^n: \,\sigma_j( diag(\lambda))
>0,\,\hbox{for every}\, j=1,\ldots,k \},$$
where $diag(\lambda)$ is the $n\times n$ diagonal matrix with entries the $\lambda_j$,
and we denote by $\overline{\Gamma}_k$ and $\partial \Gamma_k$ the closure and the boundary of $\Gamma_k$ respectively.\\

\noindent
We remark that $F$ is elliptic in the cone $\Gamma_k$, i.e.
$$F(z,s,p,M)\leq F(z,s,p,N), \quad \mbox{for all $z\in \mathbb{C}^{n}, s\in \R, p\in \R^{2n},$}$$
and where $M,N$ are $n\times n$ Hermitian matrices whose eigenvalues belong to the open cone $\Gamma_k$ and such that $M\geq N.$\\
We also note that $\big(\sigma_k\big(\cdot)\big)^{\frac{1}{k}}$ is a concave function on the cone $\Gamma_k$. 

Obviously, we also have that $F_J$ is elliptic and concave in the set of $2n\times 2n$ real symmetric  matrices $M$ such that eigenvalues of $J\, M\,\bar J^t$
 belong to the open cone $\Gamma_k.$

Therefore, we give the following
\begin{definition}\label{kconvex}
Let $z_0\in\mathbb{C}^n$ and let $\varphi$ be a $C^2$ function in a neighborhood of $z_0.$ We will say that $\varphi$ is  strictly $k$-plurisubharmonic, in brief strictly $k$-p.s.h.
%\footnote{in questo contesto forse sarebbe meglio chiamarle $k$- p.s.h. (plurisubharmonic)}
(respectively $k$-p.s.h.) at $z_0$, if  the vector $\lambda=(\lambda_1, \dots \lambda_n)$ of the eigenvalues of $\partial \bar\partial \varphi(z_0)$  belongs to the open cone $\Gamma_k$ (respectively $\overline{\Gamma}_k$). We remark that the cone  $\Gamma_k$ is invariant with respect to the permutation of $\lambda_j.$\\
We will say that $\varphi$ is  strictly $k$-p.s.h. (respectively $k$-p.s.h.) in $\Omega\subset \C^n$ if $\varphi$ is  strictly $k$-p.s.h. (respectively $k$-p.s.h.) at $z_0$ for every $z_0\in \Omega$.\\
Moreover if $\rho:\mathbb{C}^n \rightarrow \mathbb{R}$ is a smooth defining function for a bounded open set $\Omega\subset \C^n$, that is
$$\Omega=\{z\in\mathbb{C}^{n}:\rho(z)<0\} , \qquad \partial\Omega=\{z\in\mathbb{C}^{n}:\rho(z)=0\}$$
then we will say that the domain $\Omega$ is strictly $k$-p.s.h. if $\rho$ is strictly $k$-p.s.h.
\end{definition}

\noindent
In the sequel we will work essentially in the ball $B_r \subseteq \C^n$: by the previous definition, we see that for any $r>0$ and for any $k=1,\ldots,n$, the ball $B_r$ is strictly $k$-p.s.h., since the defining function $\rho(z)=|z|^2-r^2$ is strictly $k$-p.s.h.\\
We refer to \cite{il}, \cite{cil} for a full detailed exposition on the theory of viscosity solutions: we will give the basic definitions of sub- and super-solution.

\noindent
%We first recall the definition of sub- and super-solution in the viscosity sense.
\begin{definition}\label{soluzioneviscosa}
Let us consider the equation
\begin{equation}\label{equationf}
F(z,u,Du,\ddb u)=0, \qquad \hbox{in}\; \Omega,
\end{equation}
We say that an upper semicontinuous function $u$ (in brief $u\in USC(\Omega)$) is a viscosity sub-solution for (\ref{equationf}) if for every $\varphi\in C^2(\Omega)$, it holds the following: if $z_0 \in\Omega$ is a local maximum for the function $u-\varphi$, then $\varphi$ is $k$-p.s.h. at $z_0$ and
\begin{equation}\label{subsolutionviscosa}
F\big(z_0,u(z_0),D\varphi(z_0),\ddb \varphi(z_0)\big)\leq0.\\
\end{equation}
We say that a lower semicontinuous function $u$ (in brief $u\in LSC(\Omega)$) is a viscosity super-solution for (\ref{equationf}) if for every $\varphi\in C^2(\Omega)$, it holds the following: if $z_0 \in\Omega$ is a local minimum for the function $u-\varphi$, then either
$\varphi$ is $k$-p.s.h. at $z_0$ and
\begin{equation}\label{supersolutionviscosa}
F\big(z_0,u(z_0),D\varphi(z_0),\ddb \varphi(z_0)\big)\geq0\\
\end{equation}
or
$\varphi$ is not $k$-p.s.h. at $z_0.$\\
A continuous function $u$ is a viscosity solution for (\ref{equationf}) if it is either a viscosity sub-solution and a viscosity super-solution for (\ref{equationf}).\\
We say that a function $u\in USC(\overline{\Omega})$ is a viscosity sub-solution for (\ref{fullynonlinearliedirichlet}) if $u$ is a viscosity sub-solution for (\ref{equationf}) and in addition $u\leq\phi$ on $\partial\Omega$.\\
We say that a function $u\in LSC(\overline{\Omega})$ is a viscosity super-solution for (\ref{fullynonlinearliedirichlet}) if $u$ is a viscosity super-solution for (\ref{equationf}) and in addition $u\geq\phi$ on $\partial\Omega$.\\
A viscosity solution for (\ref{fullynonlinearliedirichlet}) is either a viscosity sub-solution and a viscosity super-solution for (\ref{fullynonlinearliedirichlet}).
\end{definition}

%\subsection{A Comparison Principle}\label{1.3
\noindent
We will also need a comparison principle for $F$ in the set of $k$-p.s.h. functions to ensure the uniqueness of the viscosity solution of the Dirichlet problem (\ref{fullynonlinearliedirichlet}). By following the analysis on comparison principle in \cite{cil}, we see that if the function $f$ is continuous, positive and  increasing with respect to $u$, then $F$ is proper in the set of $k$-p.s.h. functions, according to the definition in \cite{cil}, and by using the fact that in the set of the Hermitian 
 matrices $M$ such that $\sigma_j(M)\geq 0$ for all $j\leq k,$ the function $-(\sigma_k(M))^{1/k}$  is monotone decreasing, convex and homogeneous of degree one, i.e. 
 $\sigma_k^{1/k}(\lambda M)=\lambda \sigma_k^{1/k}(M)$ for all $\lambda \in \mathbb{R^+}$, then $F$ satisfies the hypotheses in \cite{cil}. So, we have

\begin{proposition}\label{comp}
If the function $f$ is continuous, positive and increasing with respect to $u$, then if
$\underline{u}$ and $\overline{u}$ are respectively viscosity sub-and super-solution of (\ref{fullynonlinearliedirichlet}) in $B_r$, such that $\underline{u}\leq \overline{u}$ on $\partial B_r$, then $\underline{u}\leq\overline{u}$ in $\overline{B}_r$.
\end{proposition}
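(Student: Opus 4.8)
The plan is to deduce Proposition~\ref{comp} from the general comparison principle for proper, degenerate-elliptic fully nonlinear equations as developed in \cite{cil} (Theorem~3.3 and its consequences), after checking that $F$, restricted to the class of $k$-p.s.h.\ functions, meets all the structural hypotheses required there. First I would record the reformulation $F_J(z,u,Du,D^2u) = F(z,u,Du,J\,D^2u\,\bar J^t)$ on $B_r\subset\C^n\approx\R^{2n}$, so that the problem becomes a standard second-order PDE $F_J(z,u,Du,D^2u)=0$ on a bounded open subset of $\R^{2n}$; since $J\,D^2u\,\bar J^t$ is linear in $D^2u$, ellipticity and concavity of $F_J$ on the relevant set of real symmetric $2n\times 2n$ matrices follow immediately from the corresponding properties of $-\sigma_k^{1/k}$ on $\Gamma_k$, which were already noted in the excerpt. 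Properness holds because $f$ is increasing in $u$: the map $s\mapsto F_J(z,s,p,M)$ is nondecreasing.

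The main technical point I would address is the one that is always the crux in these comparison arguments: controlling the $z$-dependence of $F_J$ and closing the argument via the Theorem on Sums / doubling-of-variables device. Concretely, suppose for contradiction that $\max_{\overline{B}_r}(\underline u - \overline u) = \delta > 0$; since $\underline u \le \overline u$ on $\partial B_r$ the maximum is attained at an interior point. Introduce the penalization $\Phi_\alpha(z,w) = \underline u(z) - \overline u(w) - \frac{\alpha}{2}|z-w|^2$, let $(z_\alpha,w_\alpha)$ be a maximizer, and use the standard fact that $\alpha|z_\alpha-w_\alpha|^2\to 0$ while $\Phi_\alpha(z_\alpha,w_\alpha)\to\delta$. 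The Theorem on Sums produces symmetric matrices $X,Y$ with $X\le Y$ (more precisely $\begin{pmatrix}X&0\\0&-Y\end{pmatrix}\le 3\alpha\begin{pmatrix}I&-I\\-I&I\end{pmatrix}$) such that $(p_\alpha, X)$ is in the closure of the second-order superjet of $\underline u$ at $z_\alpha$ and $(p_\alpha, Y)$ in that of the subjet of $\overline u$ at $w_\alpha$, with common $p_\alpha=\alpha(z_\alpha-w_\alpha)$. Because $\underline u$ is a subsolution, $\varphi$ must be $k$-p.s.h.\ at the test point, so $J\,X\,\bar J^t$ has eigenvalues in $\overline\Gamma_k$; feeding $(p_\alpha,X)$ and $(p_\alpha,Y)$ into the sub- and super-solution inequalities gives
\begin{equation*}
-\bigl(\sigma_k(J\,X\,\bar J^t)\bigr)^{1/k} + f(z_\alpha,\underline u(z_\alpha),p_\alpha) \le 0 \le -\bigl(\sigma_k(J\,Y\,\bar J^t)\bigr)^{1/k} + f(w_\alpha,\overline u(w_\alpha),p_\alpha).
\end{equation*}
From $X\le Y$ and ellipticity, $-\sigma_k^{1/k}(J\,X\,\bar J^t) \ge -\sigma_k^{1/k}(J\,Y\,\bar J^t)$, so the two inequalities force $f(z_\alpha,\underline u(z_\alpha),p_\alpha) \le f(w_\alpha,\overline u(w_\alpha),p_\alpha)$. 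Then monotonicity of $f$ in $u$, combined with $\underline u(z_\alpha) - \overline u(w_\alpha) \ge \delta > 0$ in the limit and the continuity of $f$ together with $z_\alpha - w_\alpha\to 0$, yields a contradiction with $\delta>0$.

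I expect the genuinely delicate step to be justifying that in the Theorem-on-Sums limit the relevant matrix $J\,X\,\bar J^t$ indeed has eigenvalues in $\overline\Gamma_k$ so that $\sigma_k^{1/k}$ is evaluated where it is defined, finite, and monotone — this is exactly where the definition of viscosity subsolution (which builds in the requirement that the test function be $k$-p.s.h.) is used, and one must be careful that the one-sided nature of the equation (superjets only for the subsolution, subjets only for the supersolution) is compatible with the Crandall–Ishii lemma. The remaining ingredients — properness from monotonicity in $u$, the homogeneity and convexity of $-\sigma_k^{1/k}$ guaranteeing the structural condition (3.14) of \cite{cil} on the modulus of continuity in $z$, and the fact that $f$ is uniformly continuous on the compact set $\overline{B}_r\times[\inf,\sup]\times(\text{bounded }p\text{-set})$ — are routine, so I would state them briefly and cite \cite{cil} for the machinery rather than reproduce it. Since the domain here is just a ball with its smooth, strictly $k$-p.s.h.\ defining function $\rho(z)=|z|^2-r^2$, no further boundary regularity discussion is needed beyond the hypothesis $\underline u\le\overline u$ on $\partial B_r$.
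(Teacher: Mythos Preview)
Your proposal is correct and follows essentially the same route as the paper: the paper does not give a self-contained proof but simply observes that $F$ is proper on the set of $k$-p.s.h.\ functions (from the monotonicity of $f$ in $u$) and that $-\sigma_k^{1/k}$ is monotone, convex and positively homogeneous of degree one on the relevant cone, so the structural hypotheses of \cite{cil} are met, and then invokes the comparison principle there. Your write-up does exactly this, with the added (and accurate) sketch of the doubling-of-variables / Theorem-on-Sums mechanism and the correct identification of the $k$-p.s.h.\ constraint on the test matrices as the only delicate point.
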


The paper is organized as follows. In Section \ref{existence}  we  prove the existence of a Lipschitz viscosity solution of (\ref{fullynonlinearliedirichlet}) in $B_r$, as limit of a sequence $\{u^\varepsilon\}$ of smooth solutions of a regularized elliptic problem, whose gradient is bounded independently on $\varepsilon$.
In Section \ref{sec:proofofmaintheorem} we prove our main result:

%\footnote{ho aggiunto l'enunciato del main theorem}
\begin{theorem}\label{main}
Let us denote by $|\p_z f|, |f_u|, |f_p|$ the derivative of $f$ with respect to its arguments. 
Suppose  $2\leq k\leq n$, and $f\in C^{\infty}( B_1\times \R \times \R^{2n})$ is a  positive function, monotone increasing with respect to $u$ and
 there exists a constant $C>0$ such that $|\p_z f|, |f_u|, |f_p| \leq C.$
Then there exist $R \in (0,1)$ and a $k$-p.s.h.  viscosity solution $u$ to the equation
\begin{equation}\label{LCr}
 F(z,u,Du,\ddb u)= 0 \quad \mbox{in} \quad B_R,
\end{equation}
such that $u \in Lip (\bar B_R).$

Moreover,  if $k>2$ then  $u \not\in C^{1, \beta}(\bar B_R)$ for any $\beta>  1-\frac{2}{k},$  if $k=2$
then $u \not\in C^{1}(\bar B_R).$  
%$B_r$ denotes the Euclidean ball in $\R^{2n+1}$ centered at the origin with radius $r$.
\end{theorem}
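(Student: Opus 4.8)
The plan is to produce the solution as a Pogorelov-type counterexample. I would first write down an explicit candidate, modeled on the classical Pogorelov example for the real Monge--Ampère and $\sigma_k$ equations. The natural ansatz is a rotationally/partially symmetric function of the form
\[
u(z) = |z'|^{2-\frac{2}{k}}\, g(z_n), \qquad z' = (z_1,\dots,z_{n-1}),
\]
or a suitable variant, where $g$ is a smooth function to be chosen; the power $2-\frac{2}{k}$ is dictated precisely so that $u$ fails to be $C^{1,\beta}$ for $\beta > 1-\frac{2}{k}$ along the $z_n$-axis (where $|z'|=0$), while still being Lipschitz. When $k=2$ the exponent is $1$, so $u$ behaves like $|z'|\,g(z_n)$ and is genuinely only Lipschitz, not $C^1$. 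The first step is to compute $\partial\bar\partial u$ for this ansatz away from the singular set $\{z'=0\}$, check that its eigenvalue vector lies in $\Gamma_k$, and evaluate $\sigma_k(\partial\bar\partial u)^{1/k}$; one finds it equals a positive smooth expression in $(z_n, g(z_n), g'(z_n), g''(z_n))$ times a bounded factor, so that choosing $g$ appropriately (solving an ODE, exactly as in Pogorelov's construction) one can arrange $\sigma_k(\partial\bar\partial u)^{1/k}$ to be a prescribed smooth positive function on a small ball.

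The second step is to invoke the existence result of Section \ref{existence}: given the boundary data $\phi = u|_{\partial B_R}$ (which is smooth since $u$ is smooth away from the axis, and $R$ small so the axis meets $\partial B_R$ transversally, or better, work on a ball centered off the axis — but the cleanest route is to keep the center on the axis and note $u$ is still Lipschitz up to the boundary), the regularized problems $F_\varepsilon = 0$ have smooth solutions $u^\varepsilon$ with gradient bounds uniform in $\varepsilon$, hence a Lipschitz viscosity solution $\tilde u$ of (\ref{fullynonlinearliedirichlet}) on $B_R$ exists. The third step is the key identification: I would argue that the explicit $u$ constructed above \emph{is} a viscosity solution of (\ref{LCr}) on $B_R$. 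Away from the singular axis this is classical since $u$ is $C^2$ and $k$-p.s.h. there. At points of the axis one must check the viscosity conditions directly: for the subsolution property, any $C^2$ function touching $u$ from above at an axis point must have complex Hessian dominated appropriately (this uses convexity/monotonicity and the structure of the singularity); for the supersolution property at axis points, one shows that a $C^2$ function touching from below either fails to be $k$-p.s.h. there or satisfies the reversed inequality — and here the Lipschitz-but-not-$C^1$ nature of $u$ along the axis is exactly what makes touching from below very restrictive. Then by the comparison principle, Proposition \ref{comp} (applicable since $f$ is continuous, positive, and increasing in $u$), $u = \tilde u$, so the genuine viscosity solution inherits the prescribed non-regularity.

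The main obstacle I expect is twofold. First, the ODE analysis: one must verify that the ODE for $g$ arising from forcing $\sigma_k(\partial\bar\partial u)^{1/k} = f(z,u,Du)$ — which is now a \emph{quasilinear} ODE because $f$ depends on $u$ and $Du$ — actually admits a smooth solution on a small enough interval with the right qualitative behavior (positivity of the relevant $\sigma_j$, keeping $\lambda \in \Gamma_k$); this is where the hypotheses $|\partial_z f|, |f_u|, |f_p| \le C$ and the freedom to shrink $R$ get used, via a fixed-point or continuity argument on a small ball. Second, and more delicate, is the verification of the viscosity supersolution inequality at points of the singular set: one needs that no smooth test function can touch $u$ from below at an axis point while being $k$-p.s.h. and violating $F \ge 0$ — the cleanest argument is that because $u$ restricted to each complex line through an axis point, transverse to the axis, has a corner (for $k=2$) or an infinite curvature blow-up (for $k>2$), any lower test function is forced to have an eigenvalue configuration outside $\Gamma_k$, so the supersolution condition is satisfied vacuously in its second alternative. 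I would carry this out by a careful local computation of $\partial\bar\partial$ of $u$ along complex directions normal to the axis and comparison with the test function's Hessian.
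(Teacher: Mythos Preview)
Your plan has a genuine gap at its very first step. The Pogorelov ansatz $u(z)=|z'|^{2-2/k}g(z_n)$ can only produce an \emph{exact} classical solution of $\sigma_k(\partial\bar\partial u)^{1/k}=f(z,u,Du)$ if the right-hand side shares the symmetry of the ansatz. Here $f$ is an arbitrary smooth positive function of all of $z$ (and of $u,Du$); once you plug the ansatz in, the left side depends only on $|z'|$ and on $z_n$ through $g$, while $f(z,u,Du)$ depends on the full $z$ with no assumed symmetry. There is no ODE in $z_n$ to solve: the equation is genuinely a PDE in all variables, and your separated candidate simply cannot satisfy it for generic $f$. The subsequent steps (verifying the viscosity conditions on the singular axis, identifying $u$ with $\tilde u$ via comparison) are therefore moot. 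A secondary issue is that even if you had such a $u$, its trace on $\partial B_R$ would not be a convex function with $\sigma_k(\partial\bar\partial\phi)=0$, which is exactly the hypothesis Proposition~\ref{gradientestimates1} requires for the boundary datum; so you could not invoke the existence machinery of Section~\ref{existence} with that datum.

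The paper's proof avoids this obstruction by abandoning the search for an explicit solution altogether. Instead it builds a \emph{pair} of explicit barriers: a regularized Pogorelov function $\psi_\varepsilon(z)=M(r^2+|z_1|^2)(\varepsilon^2+|z'|^2)^{1-1/k}$ as a classical subsolution, and $\phi_\varepsilon(z)=2M(\varepsilon^2+|z'|^2)^{1-1/k}$, which is convex, independent of $z_1$ and $z''$ (hence $\sigma_k(\partial\bar\partial\phi_\varepsilon)=0$), and a classical supersolution. Proposition~\ref{gradientestimates1} then gives a Lipschitz viscosity solution $u_\varepsilon$ with boundary data $\phi_\varepsilon$, and the comparison principle sandwiches $\psi_\varepsilon\le u_\varepsilon\le\phi_\varepsilon$. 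Passing to the limit $\varepsilon\to 0$ yields a Lipschitz viscosity solution $u$ trapped between $\psi_0$ and $\phi_0$; the two-sided bound $Mr^2|\cdot|^{2\alpha}\le u\le 2M|\cdot|^{2\alpha}$ along a line through the origin is what forces $u\notin C^{1,\beta}$ for $\beta>2\alpha-1=1-2/k$. The point is that barriers need only be sub/super-solutions, not solutions, so no symmetry of $f$ is required --- this is the idea you are missing.
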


The proof of this theorem uses Pogorelov's counterexamples (see \cite[Section 5.5]{G})
 and its extensions developed  in \cite{urbas 1990} , \cite{GLM2013}, \cite{MM}.

%%%%%%%%%%%%%%%%%%%%%%%%%%%%%%%%%%%%%%%%%%%%%%%%%%%%%%%%%%%%%%%%%%%%%%%%%%%%%%%%%%%%%%%%%%%%%%%%%%%%%%%%%%%%%
%%%%%%%%%%%%%%%%%%%%%%%%%%%%%%%%%%%%%%%%%%%%%%%%%%%%%%%%%%%%%%%%%%%%%%%%%%%%%%%%%%%%%%%%%%%%%%%%%%%%%%%%%%%%%

\section{Existence of  Lipschitz continuous viscosity solutions}\label{existence}
\noindent
Here we want to prove the existence of a Lipschitz viscosity solution of (\ref{fullynonlinearliedirichlet}) in $B_r$, as limit of a sequence $\{u^\varepsilon\}$ of smooth solutions of a regularized problem, whose gradient is bounded independently on $\varepsilon$.

\noindent
For any given $\varepsilon>0$, we define
$$F^{\varepsilon}(z,u,Du,\ddb u):=-\big(\sigma_k(\ddb u + \varepsilon\; trace(\ddb u) I_n)\big)^{\frac{1}{k}}+f(z,u,Du)$$
It turns out that $-F^{\varepsilon}$ is uniformly elliptic with ellipticity constants depending on $\varepsilon$, in particular we have:
\begin{lemma}\label{uniformlyelliptic}
Let $k=1,\ldots,n$. There exist constants $0<\lambda_{\varepsilon}<\Lambda_{\varepsilon}$, depending on $\varepsilon$, such that
$$\lambda_{\varepsilon}\; trace(N)\leq-F^{\varepsilon}(z,r,p,M+N)+F^{\varepsilon}(z,r,p,M)\leq \Lambda_{\varepsilon}\; trace(N)$$
for every $n\times n$ Hermitian and positive definite matrix $N$, for every $z\in\Omega, r\in {\mathbb R}, p\in {\mathbb R}^{2n} $, and for every $n\times n$ Hermitian matrix $M$ such that
the eigenvalues of  $M+\varepsilon \;trace (M)\; I_n$ are in the cone $\Gamma_k.$
\end{lemma}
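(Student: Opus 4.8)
The plan is to reduce everything to the classical fact that on the cone $\Gamma_k$ the map $M\mapsto \sigma_k(M)^{1/k}$ is concave and has derivative bounded above and below by positive constants depending on a lower bound for the smallest $\sigma_j$. First I would observe that the linear map $L_\varepsilon(M):=M+\varepsilon\,\trace(M)\,I_n$ is invertible on Hermitian matrices (its inverse is $N\mapsto N-\frac{\varepsilon}{1+n\varepsilon}\trace(N)I_n$), is positivity-preserving in the sense that $N\ge 0$ implies $L_\varepsilon(N)\ge 0$, and satisfies $\trace(L_\varepsilon(N)) = (1+n\varepsilon)\trace(N)$. Writing $g(M):=\sigma_k(L_\varepsilon(M))^{1/k}$, we have $-F^\varepsilon(z,r,p,M) = g(M) - f(z,r,p)$, so the claimed two-sided bound is exactly
\[
\lambda_\varepsilon\,\trace(N)\le g(M+N)-g(M)\le \Lambda_\varepsilon\,\trace(N).
\]

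Next I would fix $M$ with eigenvalues of $L_\varepsilon(M)$ in $\Gamma_k$ and $N\ge 0$ Hermitian, and consider $h(t):=g(M+tN)$ for $t\in[0,1]$. Since $L_\varepsilon(M+tN)=L_\varepsilon(M)+tL_\varepsilon(N)$ with $L_\varepsilon(N)\ge 0$, and $\Gamma_k$ is a convex cone, $\Gamma_k+\overline{\Gamma}_k\subseteq\Gamma_k$; hence the eigenvalues of $L_\varepsilon(M+tN)$ stay in $\Gamma_k$ for all $t\in[0,1]$, so $h$ is well-defined and $C^1$ there. By the chain rule,
\[
h'(t)=\sum_{i,j}\frac{\partial \sigma_k^{1/k}}{\partial A_{ij}}\Big(L_\varepsilon(M+tN)\Big)\,\big(L_\varepsilon(N)\big)_{ij}
=\trace\!\Big(\sigma_k^{1/k}{}'\big(L_\varepsilon(M+tN)\big)\,L_\varepsilon(N)\Big),
\]
where $\sigma_k^{1/k}{}'(A)$ denotes the (Hermitian, positive semidefinite) matrix of partial derivatives of $\sigma_k^{1/k}$ at $A\in\Gamma_k$. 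The key classical input, which I would simply cite from \cite{CNS} (or from the analysis in \cite{[2004]}), is that on $\Gamma_k$ this derivative matrix is positive definite and that one has bounds $c_1(A) I_n \le \sigma_k^{1/k}{}'(A) \le c_2(A) I_n$. Then $g(M+N)-g(M)=\int_0^1 h'(t)\,dt$ lies between $c_1\,\trace(L_\varepsilon(N))$ and $c_2\,\trace(L_\varepsilon(N))=(1+n\varepsilon)c_2\,\trace(N)$, which after absorbing $(1+n\varepsilon)$ gives the desired form with $\lambda_\varepsilon,\Lambda_\varepsilon$.

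The one genuine obstacle is that a priori the constants $c_1,c_2$ depend on $A$ and could degenerate (to $0$ or $\infty$) as $A$ approaches $\partial\Gamma_k$ or as $|A|\to\infty$; I need them uniform over all admissible $M$ and all $N\ge 0$. This is where the regularization is used: because $L_\varepsilon(M)=M+\varepsilon\trace(M)I_n$, admissibility forces $\trace(M)\ge 0$ and in fact each eigenvalue $\mu_i$ of $L_\varepsilon(M)$ satisfies $\mu_i \ge \varepsilon\trace(M)\ge \frac{\varepsilon}{1+n\varepsilon}\trace(L_\varepsilon(M))\ge 0$; more importantly the smallest eigenvalue is controlled from below by a fixed fraction of the trace. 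Equivalently, the eigenvalue vector of $L_\varepsilon(M)$ lies in the strictly smaller cone $\{\mu\in\Gamma_k:\ \mu_i\ge \frac{\varepsilon}{1+n\varepsilon}\sum_j\mu_j\ \forall i\}$. On this cone $\sigma_k^{1/k}$ is uniformly elliptic by homogeneity and a compactness argument (it is degree-one homogeneous, so it suffices to check the eigenvalue bounds of $\sigma_k^{1/k}{}'$ on the compact slice $\{\trace=1\}$ intersected with this closed sub-cone, which is a compact subset of the open cone $\Gamma_k$ where the derivative is continuous and positive definite, hence bounded above and bounded away from $0$). This yields $\varepsilon$-dependent but $M$-independent constants $0<\lambda_\varepsilon<\Lambda_\varepsilon$, completing the proof. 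I would write out this compactness/homogeneity step carefully since it is the crux, and treat the chain-rule computation and the properties of $L_\varepsilon$ as routine.
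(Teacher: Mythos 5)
Your reduction to bounds on increments of $g(M):=\sigma_k^{1/k}(L_\varepsilon(M))$ and the chain-rule/segment setup are fine, but the crux step — the one you yourself flag — contains a genuine error. You claim that admissibility forces each eigenvalue $\mu_i$ of $L_\varepsilon(M)=M+\varepsilon\,\trace(M)I_n$ to satisfy $\mu_i\ge \varepsilon\,\trace(M)$, hence that the eigenvalues of $L_\varepsilon(M)$ lie in the fixed subcone $\{\mu\in\Gamma_k:\ \mu_i\ge \tfrac{\varepsilon}{1+n\varepsilon}\sum_j\mu_j\}$. Writing $\lambda_i$ for the eigenvalues of $M$, one has $\mu_i=\lambda_i+\varepsilon\,\trace(M)$, so $\mu_i\ge\varepsilon\,\trace(M)$ is equivalent to $\lambda_i\ge 0$, i.e.\ to $M\ge 0$, which is not assumed. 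In fact, since $L_\varepsilon$ is a bijection of the Hermitian matrices, the hypothesis ``the eigenvalues of $L_\varepsilon(M)$ belong to $\Gamma_k$'' puts no constraint on $A:=L_\varepsilon(M)$ beyond membership of its eigenvalues in the \emph{full} open cone $\Gamma_k$: for instance, for $n=3$, $k=2$, the eigenvalue vector $(3s,3s,-s)$ lies in $\Gamma_2$ for every $s>0$, so admissible $A$'s reach the boundary behaviour and the unbounded directions of $\Gamma_k$. Consequently the compactness argument on the slice $\{\trace=1\}$ collapses, and the uniform pointwise bounds $c_1 I_n\le (\sigma_k^{1/k})'(A)\le c_2 I_n$ that you integrate along the segment are simply not available: the upper bound degenerates as $\sigma_k(A)\to 0$ with $\sigma_{k-1}(A)$ bounded away from zero, and the lower bound degenerates as $A$ tends to infinity in suitable directions inside $\Gamma_k$. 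So, as written, the proof does not go through; the regularization does not shrink the set of base points $A$, which is where you located its effect.

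Where the regularization really helps is in the \emph{increment}: for $N>0$ one has $\varepsilon\,\trace(N)I_n\le L_\varepsilon(N)\le(1+\varepsilon)\,\trace(N)I_n$, and this is how the paper argues. For the lower bound it uses superadditivity of $\sigma_k^{1/k}$ (concavity plus degree-one homogeneity), giving $g(M+N)-g(M)\ge \sigma_k^{1/k}(L_\varepsilon(N))\ge \sigma_k^{1/k}(\varepsilon\,\trace(N)I_n)=\varepsilon\,\trace(N)\,\sigma_k^{1/k}(I_n)$; for the upper bound it uses $L_\varepsilon(N)\le(1+\varepsilon)\trace(N)I_n$, the mean value theorem in the direction $I_n$, the monotonicity of that directional derivative coming from concavity, and the degree-zero homogeneity of $(\sigma_k^{1/k})'$ to evaluate it at $I_n$, yielding $\Lambda_\varepsilon=(1+\varepsilon)\,\partial_{r_{jj}}\sigma_k^{1/k}(I_n)$. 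Your chain-rule framework can be repaired along exactly these lines: for the lower bound, $h'(t)\ge \varepsilon\,\trace(N)\,\trace\big((\sigma_k^{1/k})'(A_t)\big)\ge \varepsilon\,\trace(N)\,\sigma_k^{1/k}(I_n)$, the last inequality being the standard consequence of concavity and Euler's identity; for the upper bound you must replace the pointwise bound $(\sigma_k^{1/k})'(A_t)\le c_2 I_n$ by the concavity argument above, since no such uniform pointwise bound holds on all of $\Gamma_k$.
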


\begin{proof}
It's a straightforward computation, by taking into account that the functions $\big(\sigma_k\big)^{\frac{1}{k}}$ are homogeneous of degree one, monotone increasing and concave. We have
\[
\begin{split}
&-F^{\varepsilon}(z,r,p,M+N)+F^{\varepsilon}(z,r,p,M)\\
&=\Big(\sigma_k\left( M+N+\varepsilon \, trace(M+N)I_n\right)\Big)^{\frac{1}{k}}-
\Big(\sigma_k\left( M +\varepsilon \,trace(M)I_n\right)\Big)^{\frac{1}{k}}\\
&\geq \Big(\sigma_k\left(N +\varepsilon  \, trace(N)I_n\right)\Big)^{\frac{1}{k}}\geq
\Big(\sigma_k\left( \varepsilon \, trace(N)I_n\right)\Big)^{\frac{1}{k}}\\
&\geq   \varepsilon \, trace( N ) \Big(\sigma_k \left(I_n\right)\Big)^{\frac{1}{k}}
= \lambda_{\varepsilon} \, trace( N )
\end{split}
\]
Moreover, by the monotonicity, the homogeneity and the concavity
of  $\big(\sigma_k\big)^{\frac{1}{k}}$  and by Lagrange theorem there is $\theta \in ]0,1[$ such that
\[
\begin{split}
&-F^{\varepsilon}(z,r,p,M+N)+F^{\varepsilon}(z,r,p,M)\\
&=\Big(\sigma_k\left( M+N+\varepsilon \, trace( M+N)I_n\right)\Big)^{\frac{1}{k}}-
\Big(\sigma_k\left( M +\varepsilon \,trace(M)I_n\right)\Big)^{\frac{1}{k}}\\
&\leq\Big(\sigma_k\left( M+\varepsilon \, trace( M)I_n + (1+\varepsilon) \, trace(N)I_n\right)\Big)^{\frac{1}{k}}-
\Big(\sigma_k\left( M +\varepsilon \,trace(M)I_n\right)\Big)^{\frac{1}{k}}\\
&=\Big(\partial_{r_{jj}}\big(\sigma_k\big)^{\frac{1}{k}}\left(M+\varepsilon \, trace( M)I_n + \theta(1+\varepsilon) \, trace(N)I_n\right)\Big) \; \cdot  (1+\varepsilon) \, trace(N)\\
&\leq \Big(\partial_{r_{jj}} \big(\sigma_k\big)^{\frac{1}{k}}\left(\theta(1+\varepsilon)\, trace(N)I_n\right)\Big) \cdot  (1+\varepsilon) \,trace(N)\\
&= \Big(\partial_{r_{jj}} \big(\sigma_k\big)^{\frac{1}{k}}\left(I_n\right)\Big) \cdot  (1+\varepsilon) \, trace(N)=\Lambda_{\varepsilon} \, trace\left(   N \right) \\
\end{split}
\]
where $ \partial_{r_{jj}}\left( \big(\sigma_k(r)\big)^{\frac{1}{k}}\right)$ denotes the sum in $j$ of  partial derivatives of  $\big(\sigma_k\big)^{\frac{1}{k}}$ with respect to $r_{jj}$, which is homogeneous of degree zero.
\end{proof}

\begin{remark}
Since
$$trace(\ddb u)= trace(J\; D^2 u\; \bar J^t)=\frac{1}{4}trace(D^2 u)=\frac{1}{4}\Delta u$$
the previous lemma states the uniform ellipticity for any fixed $\varepsilon>0,$ with respect to the classical Euclidian Hessian, of the functional
$$F^{\varepsilon}_J(z,u,Du,D^2u):=-\big(\sigma_k(J\;D^2 u\; \bar J^t + \frac{\varepsilon}{4}\;  \Delta u\;I_n)\big)^{\frac{1}{k}}+f(z,u,Du)$$
in the set of the $2n \times 2n$ symmetric matrices $M$ such that the eigenvalues of
$$J\;M\; \bar J^t + \frac{\varepsilon}{4}\;trace (M)\; I_n$$
are in the cone $\Gamma_k.$
\end{remark}
\noindent
We will need some structural assumptions on $f$, in order to ensure the existence of a smooth solution.  Indeed, we assume that $f$ is a smooth function and we require on $B_1 \times \R \times \R^{2n}$ the following hypotheses:
\begin{itemize}
  \item[(H1)] $f_u\geq0$;
  \item[(H2)] there exists a constant $C$ such that $|\p_z f|, |f_u|, |f_p| \leq C.$
\end{itemize}
We notice that hypothesis $(H1)$ will give us the uniqueness of the solution, by the comparison principle; on the other hand $(H2)$ will imply bounds for the second derivatives and for their H\"older seminorms as in \cite{trudi}, \cite{kry}, since $F^{\varepsilon}$ is uniformly elliptic in the sense of \cite{trudi}.  Estimates for higher derivatives follow from the linear uniformly elliptic theory \cite[Lemma 17.16]{gilbarg-trudinger}.
These estimates allow us to apply the method of continuity \cite[Theorem 17.8]{gilbarg-trudinger}.
Therefore, under hypotheses $(H1), (H2)$, for every $\phi \in C^{2,\alpha}(\bar B_r)$, $r\leq 1$, there exists a (unique) classical solution $u^{\varepsilon}\in C^{2,\alpha}(B_r)$ of the Dirichlet problem related to $F^{\varepsilon}=0$ in $B_r$ (from further regularity results, $u^{\varepsilon}$ is actually $C^\infty$); moreover  $u^{\varepsilon}$ is strictly $k$-p.s.h.\\
We will prove a gradient bound for $u^{\varepsilon}$, uniform in $\varepsilon$. Thus, by taking the uniform limit as $\varepsilon$ goes to zero we will find a Lipschitz continuous viscosity solution $u$ of the Dirichlet problem related to $F=0$; in this last limit process we can use the stability property of the viscosity solutions with respect to the uniform convergence, since the sets of $k$-p.s.h. functions satisfy a crucial property of inclusion as $\varepsilon$ decreases (see for instance \cite{urbas 1990}).\\

\noindent
We will make use of particular sub- and super-solutions that we will build with the help of a suitable convex function $\phi$.

\noindent
We have, for $2\leq k \leq n$:
\begin{proposition} \label{gradientestimates1}
Let $f$ be a positive function satisfying $(H1), (H2)$ and let $\phi \in C^{2,\alpha} (\overline{B}_1)\cap Lip (\overline{B}_1)$ be a convex function such that $\sigma_k(\ddb \phi)=0$ in $B_1$. Then there exists $r_0\leq 1$ such that for any $0<r< r_0$, the problem (\ref{fullynonlinearliedirichlet}) in $B_r$ has a viscosity solution $ u \in \mbox{Lip}\, (\overline{B}_r)$ satisfying
\begin{equation}\label{aprioriestimate}
\|u \|_{L^{\infty}( \overline{B}_r)}\, + \, \| u \|_{Lip\, (\overline{B}_r)}\,\, \leq \,\, C,
\end{equation}
where $C$ only depends on $r$, $\|\phi \|_{L^{\infty} (\overline{B}_1)}$, $ \| D \phi \|_{L^{\infty} (\overline{B}_1)}$
\end{proposition}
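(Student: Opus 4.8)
The plan is to obtain $u$ as the uniform limit, as $\varepsilon\to 0^+$, of the smooth strictly $k$-p.s.h.\ solutions $u^\varepsilon$ of the regularized Dirichlet problems $F^\varepsilon=0$ in $B_r$, $u^\varepsilon=\phi$ on $\partial B_r$, whose existence and uniqueness were established above from $(H1)$, $(H2)$ and the method of continuity. Everything reduces to bounding $\|u^\varepsilon\|_{L^\infty(\overline{B}_r)}+\|u^\varepsilon\|_{Lip(\overline{B}_r)}$ by a constant independent of $\varepsilon$: granting this, a compactness argument (Arzel\`a--Ascoli) extracts a limit $u\in Lip(\overline{B}_r)$ with the same bound and $u=\phi$ on $\partial B_r$, and since $F^\varepsilon\to F$ locally uniformly and each $u^\varepsilon$ is a viscosity solution of $F^\varepsilon=0$, the stability of viscosity solutions under uniform convergence --- together with the nestedness in $\varepsilon$ of the admissibility cones, cf.\ \cite{urbas 1990} --- shows that $u$ is a $k$-p.s.h.\ viscosity solution of $F=0$, hence of (\ref{fullynonlinearliedirichlet}), in $B_r$.

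\noindent\emph{Step 1: barriers, $L^\infty$ bound, boundary gradient bound.}
Since $\phi$ is convex, $\partial\bar\partial\phi\ge 0$; and $\sigma_k(\partial\bar\partial\phi)=0$ forces at least $n-k+1$ of its eigenvalues to vanish, so $\mathrm{rank}\,\partial\bar\partial\phi\le k-1$ on $B_1$. Hence every $k$-fold product of eigenvalues of $\partial\bar\partial\phi+\varepsilon\,\trace(\partial\bar\partial\phi)I_n$ contains a factor $\varepsilon\,\trace(\partial\bar\partial\phi)$, which gives $\sigma_k\big(\partial\bar\partial\phi+\varepsilon\,\trace(\partial\bar\partial\phi)I_n\big)\le C_\phi\,\varepsilon$ with $C_\phi$ depending only on $\|\phi\|_{C^2(\overline{B}_1)}$, $n$, $k$; since $f>0$ has a positive lower bound on the compact set $\{(z,\phi(z),D\phi(z)):z\in\overline{B}_1\}$, $\phi$ is a classical supersolution of $F^\varepsilon=0$ for all small $\varepsilon$. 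For the lower barrier I would take $\underline u:=\phi+A(|z|^2-r^2)$, so that $\partial\bar\partial\underline u=\partial\bar\partial\phi+AI_n\ge AI_n$ and hence $\big(\sigma_k(\partial\bar\partial\underline u+\varepsilon\,\trace(\partial\bar\partial\underline u)I_n)\big)^{1/k}\ge A\binom{n}{k}^{1/k}$, while $(H2)$ gives $f(z,\underline u,D\underline u)\le C_1+C\,A(r^2+2r)$ with $C_1$ depending on $\|\phi\|_{L^\infty(\overline{B}_1)}$, $\|D\phi\|_{L^\infty(\overline{B}_1)}$ and the structural constants. Thus, choosing $r_0\le 1$ with $C(r_0^2+2r_0)\le\tfrac12\binom{n}{k}^{1/k}$ and then $A$ large, $\underline u$ is a classical subsolution of $F^\varepsilon=0$ for every $0<r<r_0$, with $\underline u=\phi$ on $\partial B_r$. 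The (classical) comparison principle for the uniformly elliptic $F^\varepsilon$ then gives $\underline u\le u^\varepsilon\le\phi$ on $\overline{B}_r$, whence $\|u^\varepsilon\|_{L^\infty(\overline{B}_r)}\le C$; and since at a boundary point the tangential derivatives of $u^\varepsilon$ agree with those of $\phi$ while $\partial_\nu\underline u\le\partial_\nu u^\varepsilon\le\partial_\nu\phi$ for the inner normal $\nu$, also $\sup_{\partial B_r}|Du^\varepsilon|\le C$ --- all constants independent of $\varepsilon$.

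\noindent\emph{Step 2: interior gradient bound (the main obstacle).}
The remaining and hardest point is $\sup_{B_r}|Du^\varepsilon|\le C$ uniformly in $\varepsilon$: the ellipticity constants of $F^\varepsilon$ in Lemma \ref{uniformlyelliptic} degenerate as $\varepsilon\to 0$, so one cannot appeal to standard uniformly-elliptic gradient estimates and must instead use the structure of $\sigma_k^{1/k}$, as in \cite{urbas 1990}, \cite{GLM2013}, \cite{MM}. I would study an auxiliary function of the type $W=|Du^\varepsilon|^2 e^{\gamma(M-u^\varepsilon)}$ (or a suitable variant), $M=\sup_{\overline{B}_r}u^\varepsilon$: if its maximum over $\overline{B}_r$ is attained on $\partial B_r$ we are done by Step 1, otherwise at an interior maximum $z_0$ one applies the linearized operator $\mathcal L=\sum_{\alpha\beta}\partial_{r_{\alpha\beta}}G\,\partial_{\alpha\beta}$ --- where $G(r)=\sigma_k^{1/k}\big(J\,r\,\bar J^t+\tfrac{\varepsilon}{4}\trace(r)I_n\big)$, so that $\mathcal L$ is uniformly elliptic for each fixed $\varepsilon$ by Lemma \ref{uniformlyelliptic} --- and differentiates the equation $G(D^2u^\varepsilon)=f(z,u^\varepsilon,Du^\varepsilon)$ once. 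Two $\varepsilon$-uniform consequences of the homogeneity and concavity of $G$ drive the estimate: Euler's identity gives $\mathcal L u^\varepsilon=\sum_{\alpha\beta}\partial_{r_{\alpha\beta}}G\,(D^2u^\varepsilon)_{\alpha\beta}=G(D^2u^\varepsilon)=f$, which is bounded; and concavity together with $1$-homogeneity gives $\sum_\alpha\partial_{r_{\alpha\alpha}}G\ge c_0>0$ with $c_0$ independent of $\varepsilon$, a lower bound for the trace of the coefficient matrix. Feeding these --- together with $(H1)$, which makes the $f_u$-term favorable, and $(H2)$, which forces the $\partial_z f$- and $f_p$-terms to grow at most linearly in $|Du^\varepsilon|$ (the $f_p$-contribution reducing at the critical point to a multiple of $|Du^\varepsilon|^2\langle f_p,Du^\varepsilon\rangle$) --- into $\mathcal L W(z_0)\le 0$ and choosing $\gamma$ large enough, one forces $|Du^\varepsilon(z_0)|\le C$, hence $W\le C$ on $\overline{B}_r$ and $\sup_{B_r}|Du^\varepsilon|\le C$, with $C=C\big(r,\|\phi\|_{L^\infty(\overline{B}_1)},\|D\phi\|_{L^\infty(\overline{B}_1)}\big)$. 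Combining Steps 1 and 2 gives the bound (\ref{aprioriestimate}) for $u^\varepsilon$ uniformly in $\varepsilon$, and the limit procedure of the first paragraph then proves the proposition. The genuine work is in Step 2: one must arrange the maximum-principle computation so that every term is controlled via $\mathcal L u^\varepsilon=f$ and the trace lower bound $c_0$, rather than via the degenerating ellipticity constants.
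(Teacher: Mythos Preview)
Your proposal is correct and follows the same architecture as the paper: regularize to $F^\varepsilon$, sandwich $u^\varepsilon$ between the supersolution $\phi$ and the subsolution $\phi+A(|z|^2-r^2)$ to get $L^\infty$ and boundary gradient bounds, reduce the interior gradient to the boundary, then pass to the limit via Arzel\`a--Ascoli and viscosity stability.

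Two minor differences are worth noting. First, in the subsolution step the paper bounds $f(z,\phi+\lambda\rho,D\phi+2\lambda z)$ using $(H1)$ (monotonicity in $u$, since $\rho<0$) together with a sup over a compact $(z,p)$-set, and then \emph{defines} $r_0=1/\lambda^*$ in terms of that sup; you instead use the Lipschitz bounds $(H2)$ to get linear growth in $A$ and absorb it by choosing $r_0$ small. Both work and produce the same dependence of the constants. Second---and this is the more interesting contrast---you flag the $\varepsilon$-uniform interior gradient bound as ``the main obstacle'' and outline a Bernstein argument with auxiliary function $|Du^\varepsilon|^2 e^{\gamma(M-u^\varepsilon)}$, correctly isolating the two $\varepsilon$-independent structural facts (Euler's identity $\mathcal L u^\varepsilon=f$ from $1$-homogeneity, and the trace lower bound $\sum\partial_{r_{\alpha\alpha}}G\ge\sigma_k^{1/k}(I_n)$ from concavity plus homogeneity). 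The paper, by contrast, handles this in one line by invoking the technique of \cite[Lemma~1.4]{CKNS}, which yields directly $\sup_{B_r}|Du^\varepsilon|\le\sup_{\partial B_r}|Du^\varepsilon|+c$ with $c$ depending only on the constant in $(H2)$. So what you present as the hardest step is, in the paper's treatment, a black-box citation; your sketch is essentially a rediscovery of that lemma's mechanism, and the ingredients you name are the right ones.
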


\noindent
We will prove the Proposition \ref{gradientestimates1} in some steps. First of all, since $-F^{\varepsilon}$ is uniformly elliptic, by the hypotheses on $f$, there exists a smooth solution $u^{\varepsilon}$ of the problem
\begin{equation}\label{fullynonlinearliedirichletvarepsilondelta}
\left\{
\begin{array}{ll}
    F^{\varepsilon}(z,u,D u,\ddb u)=0, & \hbox{in}\, B_r, \\
    u=\phi, & \hbox{on} \,\partial B_r,\\
\end{array}
\right.
\end{equation}
Moreover, applying the technique of \cite[Lemma 1.4]{CKNS}, we get
\begin{equation}\label{stimagradienteinterno}
\sup_{B_r}|D u^{\varepsilon}| \leq  \sup_{\partial B_r} |D u^{\varepsilon}|+c
\end{equation}
where $c$ is a positive constant independent of $\varepsilon$ and only depending on the constant $C$ in $(H2)$.
\noindent
Now, we are going to find explicitly global sub- and super-solutions.
\noindent
\begin{lemma}\label{lem1}
Let $\phi \in C^2 (\overline{B}_1)\cap Lip (\overline{B}_1)$ be a convex function such that $\sigma_k(\ddb \phi)=0$ in $B_1$. Then there exists $\varepsilon_0\leq 1$ such that for every $0<\varepsilon< \varepsilon_0$ we have
\begin{equation}\label{subs1}
F^{\varepsilon}(z, \phi, D \phi, \ddb \phi) >0 %\,\, \mbox{in}\,\, B_r,
\end{equation}
%for any $0<r\leq1$.
\end{lemma}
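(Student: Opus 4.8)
The plan is to compute $F^{\varepsilon}(z,\phi,D\phi,\ddb\phi)$ directly and exploit the hypothesis $\sigma_k(\ddb\phi)=0$ together with the convexity of $\phi$. Write $A=\ddb\phi$, which is Hermitian positive semidefinite (since $\phi$ is convex, its real Hessian $D^2\phi\geq0$, hence $A=J\,D^2\phi\,\bar J^t\geq0$), with $\sigma_k(A)=0$. By definition,
\[
F^{\varepsilon}(z,\phi,D\phi,\ddb\phi)=-\bigl(\sigma_k\bigl(A+\varepsilon\,\trace(A)\,I_n\bigr)\bigr)^{\frac1k}+f(z,\phi,D\phi).
\]
Since $f$ is positive, it suffices to show that $\sigma_k\bigl(A+\varepsilon\,\trace(A)\,I_n\bigr)$ tends to $0$ uniformly on $\overline{B}_1$ as $\varepsilon\to0$; then for $\varepsilon$ small the first term is dominated by $\inf_{\overline B_1}f>0$ and the inequality \eqref{subs1} follows. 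Note that the eigenvalues of $A+\varepsilon\,\trace(A)\,I_n$ are $\lambda_j(A)+\varepsilon\sum_\ell\lambda_\ell(A)\geq0$, and since at least one $\lambda_j(A)$ may vanish we are genuinely on the boundary of $\Gamma_k$ for $\varepsilon=0$; the point is that the perturbation is small.

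The key computational step is the estimate $\sigma_k(A+\varepsilon\,\trace(A)I_n)\le c_{k,n}\,\varepsilon\,(\trace A)^k$ (or some comparable bound vanishing with $\varepsilon$), valid because $\sigma_k(A)=0$. One clean way: diagonalize $A$ with eigenvalues $\mu_1,\dots,\mu_n\ge0$ and expand $\sigma_k(\mu_1+\varepsilon S,\dots,\mu_n+\varepsilon S)$ with $S=\sum\mu_\ell$ as a polynomial in $\varepsilon$. The $\varepsilon^0$ term is exactly $\sigma_k(\mu)=0$, so every surviving term carries a factor $\varepsilon$, and each coefficient is bounded by a universal constant times a power of $S=\trace(A)$, which in turn is controlled by $\|D^2\phi\|_{L^\infty(\overline B_1)}$ (finite since $\phi\in C^2(\overline B_1)$). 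Hence $\sigma_k(A+\varepsilon\,\trace(A)I_n)^{1/k}\le C'\varepsilon^{1/k}\,\|D^2\phi\|_{L^\infty}$. Choosing $\varepsilon_0$ so small that $C'\varepsilon_0^{1/k}\|D^2\phi\|_{L^\infty}<\inf_{\overline B_1}f$ gives the claim for all $0<\varepsilon<\varepsilon_0$.

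The only mild obstacle is making the polynomial expansion and the uniform bound rigorous: one must observe that $\sigma_k$ is a symmetric polynomial, that the $\varepsilon^0$ coefficient vanishes identically by the hypothesis $\sigma_k(\ddb\phi)\equiv0$ on $B_1$ (extended to $\overline B_1$ by continuity since $\phi\in C^2(\overline B_1)$), and that all eigenvalues involved are nonnegative so no cancellation hides a large term. Alternatively, one can avoid the explicit expansion by using the monotonicity of $\sigma_k^{1/k}$ on $\overline\Gamma_k$ together with the Newton-type inequality relating $\sigma_k$ and $\sigma_1$; in either case the argument is short. I would also remark that $\varepsilon_0$ depends only on $n$, $k$, $\|D^2\phi\|_{L^\infty(\overline B_1)}$ and $\inf_{\overline B_1}f$, which is consistent with the later use of this lemma in building the uniform-in-$\varepsilon$ sub- and super-solutions.
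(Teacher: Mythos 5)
Your proposal is correct and follows essentially the same route as the paper: use convexity to get $\ddb\phi\geq 0$, expand $\sigma_k(\ddb\phi+\varepsilon\,\trace(\ddb\phi)I_n)$ in powers of $\varepsilon$, note the zeroth-order term is $\sigma_k(\ddb\phi)=0$, bound the remaining terms by a constant times $\varepsilon\,(\trace\ddb\phi)^k$, and then choose $\varepsilon_0$ so that this is beaten by $\inf f>0$. The only (cosmetic) differences are that the paper controls the lower-order terms $\sigma_j(\ddb\phi)$ via Newton--Maclaurin type inequalities and splits into the cases $\sup\trace(\ddb\phi)=0$ and $>0$, whereas you bound them directly through the nonnegative eigenvalues and $\|D^2\phi\|_{L^\infty}$.
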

\begin{proof}
Let us define
$$\inf_{B_1\times\R\times\R^{2n}} f =c_0>0$$
Therefore
$$F^{\varepsilon}(z, \phi, D \phi, \ddb \phi)=-\sigma_k\left(\partial \bar \partial \phi +\varepsilon \;trace(\ddb \phi) I_n \right)^{1/k}+f(z,\phi, D \phi)\geq $$
$$\geq -\sigma_k\left(\partial \bar \partial \phi +\varepsilon\;trace(\ddb \phi) I_n \right)^{1/k}+c_0$$
We know there exist positive constants $\{c_1, \ldots,c_n\}$ such that
$$0\leq c_n \left(\sigma_n\left(\partial \bar \partial \phi \right)\right)^{1/n}\leq \ldots \leq c_k \left(\sigma_k\left(\partial \bar \partial \phi \right)\right)^{1/k}\leq \ldots\leq c_1 \;trace(\ddb \phi)$$
Since $\sigma_k(\ddb \phi)=0$ in $B_1$, we have that, if
$$\sup_{z\in B_1} \;trace(\ddb \phi(z))=0$$
then $\ddb \phi =0$ in $B_1$, therefore for any positive $\varepsilon$
$$F^{\varepsilon}(z, \phi, D \phi, \ddb \phi)= f(z,\phi,\p \phi)\geq c_0>0$$
in $B_1$. On the other hand, let
$$\sup_{z\in B_1} \;trace(\ddb \phi(z))=M>0$$
then we have, for $0<\varepsilon\leq1$,
$$\sigma_k\left(\partial \bar \partial \phi +\varepsilon \;trace(\ddb \phi) I_n \right)=\sigma_k\left(\partial \bar \partial \phi\right)+(\varepsilon \;trace(\ddb \phi))\sigma_{k-1}\left(\partial \bar \partial \phi\right)+\ldots$$
$$+(\varepsilon \;trace(\ddb \phi))^{k-1}\;trace(\ddb \phi)+(\varepsilon \;trace(\ddb \phi))^k\leq$$
$$\leq (\varepsilon \;trace(\ddb \phi))\frac{c_1}{c_{k-1}}(\;trace(\ddb \phi))^{k-1}+ \ldots$$
$$+ (\varepsilon \;trace(\ddb \phi))^{k-1}\;trace(\ddb \phi)+(\varepsilon \;trace(\ddb \phi))^k \leq$$
$$\leq k c^* \varepsilon (\;trace(\ddb \phi))^k \leq k c^* \varepsilon M^k $$
where
$$c^*=\max\Big\{\frac{c_1}{c_{k-1}}, \ldots, \frac{c_1}{c_{2}} \Big\}$$
Therefore, if we let $\varepsilon_0=\min\{1,\frac{c_0^k}{k c^* M^k}\}$, we get
$$F^{\varepsilon}(z, \phi, D \phi, \ddb \phi)=-\sigma_k\left(\partial \bar \partial \phi +\varepsilon \;trace(\ddb \phi) I_n \right)^{1/k}+f(z,\phi,D \phi)\geq$$
$$\geq - (k c^* \varepsilon M^k)^{1/k}+c_0>0$$
for any $0<\varepsilon< \varepsilon_0$.
\end{proof}

\begin{lemma}\label{main2}
Let $\phi \in C^2 (\overline{B}_1)\cap Lip (\overline{B}_1)$ be a convex function. For positive $\lambda$ and $r$, we define
$$  u_{\lambda} (z) : = \phi (z)+ \lambda \rho(z),  $$
where $\rho(z)   =|z|^2-r^2$. Then, there exists $\lambda^* \geq1$, depending on $\sup_{B_1} |\phi|$ and $\sup_{B_1} |D\phi|$, such that
\begin{equation}\label{subs2}
F^\varepsilon (z, u_{\lambda^*}, D u_{\lambda^*}, \ddb u_{\lambda^*}) <0 \,\, \mbox{in}\,\, B_r,
\end{equation}
for any $r<1/\lambda^*$.
\end{lemma}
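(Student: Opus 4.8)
The plan is to produce a global super-solution by pushing the convex function $\phi$ down with a large multiple of the negative defining function $\rho$. Since $\rho$ is smooth and $\ddb\rho = I_n$ (indeed $\rho(z)=|z|^2-r^2$ gives $\rho_{j\bar\ell}=\delta_{j\ell}$), we have $\ddb u_\lambda = \ddb\phi + \lambda I_n$, hence $\mathrm{trace}(\ddb u_\lambda) = \mathrm{trace}(\ddb\phi) + n\lambda$, and so
$$
\ddb u_\lambda + \varepsilon\,\mathrm{trace}(\ddb u_\lambda)\,I_n = \ddb\phi + \varepsilon\,\mathrm{trace}(\ddb\phi)\,I_n + \lambda(1+\varepsilon n)I_n.
$$
Because $\phi$ is convex, $\ddb\phi\geq 0$, so all eigenvalues of $\ddb\phi + \varepsilon\,\mathrm{trace}(\ddb\phi)I_n$ are nonnegative; adding the scalar matrix $\lambda(1+\varepsilon n)I_n$ shifts every eigenvalue up by at least $\lambda$. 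By monotonicity of $\sigma_k^{1/k}$ on $\overline\Gamma_k$ and the elementary bound $\sigma_k(\mu I_n) = \binom{n}{k}\mu^k$ for $\mu\geq 0$, we get
$$
\sigma_k\!\left(\ddb u_\lambda + \varepsilon\,\mathrm{trace}(\ddb u_\lambda)I_n\right)^{1/k} \;\geq\; \sigma_k\!\left(\lambda(1+\varepsilon n)I_n\right)^{1/k} \;=\; \binom{n}{k}^{1/k}\lambda(1+\varepsilon n) \;\geq\; \binom{n}{k}^{1/k}\lambda.
$$

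Next I would bound the zeroth-order term from above. On the ball $B_r\subset B_1$ with $r<1$ we have $|u_\lambda|\leq \sup_{B_1}|\phi| + \lambda r \leq \sup_{B_1}|\phi| + \lambda$ (using $|\rho|\leq r^2 \le 1$ on $B_r$), and $|Du_\lambda| \leq \sup_{B_1}|D\phi| + 2\lambda r \leq \sup_{B_1}|D\phi| + 2\lambda$. Since by hypothesis (H2) (i.e. $|f_u|,|f_p|\leq C$) $f$ grows at most linearly in $(u,p)$, there is a constant $A$, depending only on $C$, $\sup_{B_1}|\phi|$, $\sup_{B_1}|D\phi|$, and $\sup_{B_1\times[-M_0,M_0]\times B_{M_1}}f$ for fixed $M_0,M_1$, such that on $B_r$
$$
f(z,u_\lambda(z),Du_\lambda(z)) \;\leq\; A(1+\lambda).
$$
Therefore
$$
F^\varepsilon(z,u_{\lambda},Du_{\lambda},\ddb u_{\lambda}) \;=\; -\sigma_k\!\left(\ddb u_\lambda + \varepsilon\,\mathrm{trace}(\ddb u_\lambda)I_n\right)^{1/k} + f(z,u_\lambda,Du_\lambda) \;\leq\; -\binom{n}{k}^{1/k}\lambda + A(1+\lambda).
$$
If $\binom{n}{k}^{1/k} > A$ this is already negative for $\lambda$ large; in general the linear-in-$\lambda$ term $A\lambda$ could dominate, which is the point where care is needed — this is the main obstacle.

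The resolution is that the restriction $r<1/\lambda^*$ lets the $\rho$-term be used twice: not only does $\lambda I_n$ appear inside $\sigma_k$, but we are free to keep $\lambda r\leq 1$, so the contributions of $\rho$ to $u_\lambda$ and $Du_\lambda$ are bounded by $\lambda r^2\le r \le 1$ and $2\lambda r \le 2$ respectively, \emph{independently of $\lambda$}. Thus the genuinely $\lambda$-dependent part of $f$ disappears: $f(z,u_\lambda,Du_\lambda)\leq A_0$ with $A_0$ depending only on $C$, $\sup_{B_1}|\phi|$ and $\sup_{B_1}|D\phi|$ (and $f$ itself), uniformly in $\lambda\geq 1$ and $\varepsilon\le 1$, as long as $\lambda r\le 1$. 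Then
$$
F^\varepsilon(z,u_{\lambda},Du_{\lambda},\ddb u_{\lambda}) \;\leq\; -\binom{n}{k}^{1/k}\lambda + A_0,
$$
and we simply choose $\lambda^* := \max\{1,\ (A_0+1)\binom{n}{k}^{-1/k}\}$, which depends only on $\sup_{B_1}|\phi|$ and $\sup_{B_1}|D\phi|$ as claimed; the inequality \eqref{subs2} then holds on $B_r$ for every $r<1/\lambda^*$ and every $0<\varepsilon\le 1$. I would finish by noting that the eigenvalue shift argument also shows $\ddb u_{\lambda^*}+\varepsilon\,\mathrm{trace}(\ddb u_{\lambda^*})I_n$ has eigenvalues in $\Gamma_k$, so $F^\varepsilon$ is evaluated in the elliptic regime and the computation is legitimate.
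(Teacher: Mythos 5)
Your proposal is correct and follows essentially the same route as the paper: use $\partial\bar\partial u_\lambda \ge \lambda I_n$ together with monotonicity and homogeneity of $\sigma_k^{1/k}$ to get a lower bound proportional to $\lambda$, and exploit the constraint $r<1/\lambda^*$ so that $u_{\lambda^*}$ and $Du_{\lambda^*}$ stay in a $\lambda$-independent compact set, making the choice of $\lambda^*$ non-circular. The only cosmetic difference is that the paper bounds $f$ using its monotonicity in $u$ (via $\rho<0$, taking $E:=\sup_{(z,p)\in B_1\times B_{L+2}} f(z,\sup_{B_1}\phi,p)$ and $\lambda^*=\max\{1,E\}$), whereas you bound $f$ on a compact set of $(z,u,p)$ and add $+1$ to secure strict inequality.
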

\begin{proof}
Since $\phi$ is a plurisubharmonic function,
$$\partial \bar \partial u_{\lambda}= \partial \bar \partial \phi +\lambda I_n\geq \lambda I_n$$
In particular $u_{\la}$ is $k$-p.s.h. for every $\lambda >0$ and
$$\ddb u_\la + \varepsilon \;trace (\ddb u_\la)\;I_n \geq \ddb u_\la$$
Moreover, as a consequence of the monotonicity of the function $\sigma_k^{1/k}(\cdot)$ and of its homogeneity, i.e.  $\sigma_k^{1/k}({\la}\cdot)= {\la}  \sigma_k^{1/k}(\cdot)$ for every $\la>0,$   we get
\[
 \begin{split}
 & F^\varepsilon(z,u_{\la}, D u_{\la},  \ddb u_{\la}) \leq F(z,u_{\la}, D u_{\la},  \ddb u_{\la}) =\\
  &= -\sigma_k\left(\partial \bar \partial u_{\lambda} \right)^{1/k}+f(z,u_{\la}, D u_{\la})\leq -{\la} + f(z, \phi (z)+ \lambda \rho(z), D \phi (z)+ 2\lambda z)
\end{split}
 \]
Now, let
$$L:=\sup_{z \in B_1} |D \phi(z)|, \qquad E:=\sup_{(z,p)\in B_1 \times B_{L+2}} f(z,\sup_{B_1} \phi, p), \qquad \lambda^*:=\max\{1,E\}$$
so, if $r<1/\lambda^*$ and $z\in B_r$, it holds
$$|D u_{\lambda^*} (z)|=|D \phi (z)+ 2\lambda^* z | \leq \sup_{B_r} |D \phi|+ 2\lambda^* r \leq \sup_{B_1} |D\phi|+ 2\lambda^* r \leq L+2$$
Then, since $\rho$ is negative in $B_r$ and $f$ is increasing with respect to $u$, we have for any $r<1/\lambda^*$
$$f(z, \phi (z)+ \lambda^* \rho(z), D \phi (z)+ 2\lambda^* z)\leq f(z, \sup_{B_1} \phi, D \phi (z)+ 2\lambda^* z)\leq $$
$$\leq \sup_{(z,p)\in B_1 \times B_{L+2}} f(z,\sup_{B_1} \phi, p)\leq \lambda^*$$
and this ends the proof.
\end{proof}

\begin{proof}[Proof of Proposition \ref{gradientestimates1} ]
Let $u_{\lambda^*} = \phi + \lambda^* \rho$ be the function given by the $\lambda^*$ as defined in the previous lemma. So, if $r<r_0:=1/\lambda^*$, then $u_{\lambda^*} \in C^2(\overline{B}_r)$ and it is a classical sub-solution to  $F^\varepsilon=0$ in $B_r$. Moreover,  $u_{\lambda^*} = \phi$ on $\partial B_r$.\\
On the other hand, if $\varepsilon<\varepsilon_0$ then $\phi$ is a classical  super-solution  to  $F^\varepsilon=0$ in $B_r$.\\
So, by the comparison principle we have
$$ u_{\lambda^*} \leq u^\varepsilon \leq \phi \mbox{ in } \overline{B}_r, \quad r<r_0, \quad \varepsilon<\varepsilon_0$$
Since
$$|u_{\lambda^*} (z)|= |\phi (z)+ \lambda^* \rho(z) |\leq \sup_{B_r} |\phi| + \lambda^* r^2\leq \|\phi \|_{L^{\infty} (\overline{B}_1)}+r$$
and
$$|D u_{\lambda^*} (z)|=|D \phi (z)+ 2\lambda^* z | \leq \sup_{B_r} |D \phi|+ 2\lambda^* r \leq \|D \phi \|_{L^{\infty} (\overline{B}_1)}+2$$
then, if $u$ denotes the uniform limit of $u^\varepsilon$ as $\varepsilon$ goes to zero, we  can conclude that $u \in \mbox{Lip}\, (\overline{B}_r)$ independently on $\varepsilon$ and
\begin{equation}
\|u \|_{L^{\infty}( \overline{B}_r)}\, + \, \| u \|_{Lip\, (\overline{B}_r)}\,\, \leq \,\, C,
\end{equation}
with $C$ depending on $r$, $\|\phi \|_{L^{\infty} (\overline{B}_1)}$, $ \| D \phi \|_{L^{\infty} (\overline{B}_1)}$
\end{proof}

\section{Existence of nonsmooth solutions}\label{sec:proofofmaintheorem}

\noindent
Throughout this section we denote by
$$ z=(z_1, z',z''),\,\, z' = (z_2, \dots, z_k), \quad z''=(z_{k+1}, \dots, z_n)$$
For  $ 0 \leq \varepsilon <1$ and $ 0 < r < r_0$ such that Proposition \ref{gradientestimates1} holds true, we define
\begin{equation}\label{w2bis}
 w_{\varepsilon} (z) = w_{\varepsilon}(z) : = (r^2+|z_1|^2)(\varepsilon^2+ |z'|^2)^{\alpha}, \quad \alpha = 1- \frac{1}{k},
\end{equation}
and
$$
\psi_{\varepsilon}(z) : = M w_{\varepsilon}(z), \quad \phi_{\varepsilon} (z) = \phi_{\varepsilon}(z) : = 2 M ( \varepsilon^2 + |z'|^2)^{\alpha},
$$
with $M$ a positive constant to be determined.

\begin{lemma}\label{lemma1}
There exists $M = M(r)$ such that
\begin{equation}\label{claim}
F(z, \psi_{\varepsilon}, D \psi_{\varepsilon}, \partial \bar \partial\psi_{\varepsilon} ) < 0 \quad \mbox{in}\,\, B_r, \quad \forall\,\varepsilon \in ]0, r[.
\end{equation}
\end{lemma}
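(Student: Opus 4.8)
The plan is to show that $\psi_\varepsilon = M w_\varepsilon$ is a strict classical subsolution of $F = 0$ in $B_r$ by computing its complex Hessian explicitly and estimating $\sigma_k$ from above, while $f$ is bounded below by $c_0 > 0$; choosing $M$ large will then force the inequality. First I would compute $\ddb w_\varepsilon$. Writing $w_\varepsilon(z) = g(z_1)h(z')$ with $g(z_1) = r^2 + |z_1|^2$ and $h(z') = (\varepsilon^2 + |z'|^2)^\alpha$, the mixed Hessian decomposes into blocks: the $z_1$-block contributes $h(z')\,\ddb g = h(z')\, E_{11}$ (a rank-one positive piece in the $z_1$ direction, with eigenvalue $h(z')$), the $z'$-block contributes $g(z_1)\,\ddb h$, there are cross terms between $z_1$ and $z'$ of the form $g'(z_1)\otimes \bar\partial h$, and the $z''$-block is identically zero. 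The key structural fact, familiar from the Pogorelov-type constructions in \cite{urbas 1990}, \cite{GLM2013}, \cite{MM}, is that $h(z') = (\varepsilon^2+|z'|^2)^{\alpha}$ with $\alpha = 1 - \tfrac1k$ has a degenerate complex Hessian: $\ddb h$ is positive semidefinite but one combination degenerates, and more importantly $\sigma_{k-1}(\ddb h) \equiv 0$ on the $(k-1)$-dimensional $z'$-space because $h$ is a function of $|z'|^2$ raised to precisely the exponent $1-\tfrac1k$ (its Hessian has rank at most $k-1$ but its "top" symmetric function among the $k-1$ variables already vanishes for this critical exponent).

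Next I would exploit this degeneracy to bound $\sigma_k(\ddb\psi_\varepsilon)$ from above. Because the $z''$-directions contribute zero eigenvalues, only the $k$ directions $z_1, z'$ matter, so $\sigma_k(\ddb\psi_\varepsilon) = M^k \det$ of the $k\times k$ principal block in the $z_1, z'$ variables. Expanding that determinant (or using the additivity of $\sigma_k$ in a rank decomposition), the only surviving term is the one pairing the rank-one $z_1$-contribution $h(z')\,E_{11}$ against $\sigma_{k-1}$ of the $z'$-block $g(z_1)\,\ddb h$; but $\sigma_{k-1}(\ddb h) = 0$ kills exactly the term without a $g$-factor, and the remaining terms all carry an extra power of the small quantities. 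Carefully, one finds $\sigma_k(\ddb w_\varepsilon) \le C(r) (\varepsilon^2+|z'|^2)^{k\alpha - (k-1)} \cdot (\text{bounded})$, and since $k\alpha - (k-1) = k - 1 - (k-1) = 0$, this is in fact bounded by a constant $C(r)$ depending only on $r$ (using $|z_1| \le r < 1$, $|z'| \le r$). Hence $\sigma_k(\ddb\psi_\varepsilon)^{1/k} \le C(r)^{1/k} M$.

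Then the subsolution inequality is $F(z,\psi_\varepsilon, D\psi_\varepsilon, \ddb\psi_\varepsilon) = -\sigma_k(\ddb\psi_\varepsilon)^{1/k} + f(z,\psi_\varepsilon, D\psi_\varepsilon)$. I would first check $\psi_\varepsilon$ is $k$-p.s.h.\ so that $\sigma_k^{1/k}$ makes sense on the right cone: this requires verifying $\ddb w_\varepsilon$ has its eigenvalue vector in $\overline{\Gamma}_k$, which follows because $\ddb g, \ddb h \ge 0$ and the relevant lower-order symmetric functions are nonnegative — or one can perturb by noting $\ddb(w_\varepsilon + \delta|z|^2)$ and let $\delta \to 0$, but the homogeneity structure should make $w_\varepsilon \in \overline\Gamma_k$ directly. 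For the strict inequality I need $\sigma_k(\ddb\psi_\varepsilon)^{1/k} > \sup f$ on the relevant range. Using (H2), $f(z,\psi_\varepsilon,D\psi_\varepsilon) \le f(z,0,0) + C(|\psi_\varepsilon| + |D\psi_\varepsilon|)$, and on $B_r$ both $\psi_\varepsilon = Mw_\varepsilon$ and $D\psi_\varepsilon$ are bounded by $M$ times a constant depending only on $r$ (here I use $r < 1$, so $|z_1|^2 + r^2 \le 2$, $(\varepsilon^2+|z'|^2)^\alpha \le 2$, and the gradient of $w_\varepsilon$ is likewise bounded). So $f \le A(r) + B(r) M$ with $A, B$ depending only on $r$. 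Thus the inequality to achieve is $C(r)^{1/k} M > A(r) + B(r) M$; but here lies a subtlety I must handle — the right side is also linear in $M$. The resolution is that the upper bound on $\sigma_k(\ddb w_\varepsilon)$ should actually be $o(1)$ or carry a genuinely small factor, OR the $f$-term's $M$-dependence must be absorbed: I expect the honest estimate gives $\sigma_k(\ddb\psi_\varepsilon)^{1/k} \ge c(r) M$ for a definite $c(r) > 0$ (the degenerate-but-nonzero leading term, bounded below away from the origin region), while the subprincipal growth of $f$ in its gradient argument actually grows like $M$ only through $|D\psi_\varepsilon| \sim M r$, which is small when $r$ is small; choosing $r$ small first and then $M$ large, the term $B(r)M$ with $B(r) = O(r)$ is beaten by $c(r)M$.

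The main obstacle will be the determinant/$\sigma_k$ computation for $\ddb w_\varepsilon$ and pinning down the precise homogeneity in $(\varepsilon^2 + |z'|^2)$: one must verify that the exponent $\alpha = 1 - \tfrac1k$ is exactly what makes $\sigma_k(\ddb w_\varepsilon)$ scale correctly (neither blowing up near $z' = 0$ nor decaying), which is the whole point of the Pogorelov ansatz, and then separately track how the lower-order $\sigma_j$ behave to confirm $k$-plurisubharmonicity and to get the strict sign with the correct dependence on $M$ and $r$. I would organize the proof as: (i) compute the block structure of $\ddb w_\varepsilon$; (ii) compute $\sigma_k$ via the rank-reduction, using $\sigma_{k-1}(\ddb h) = 0$; (iii) bound $f$ using (H2) on $B_r$; (iv) verify $k$-p.s.h.; (v) choose $r_0$ small, then $M = M(r)$ large, to conclude \eqref{claim}.
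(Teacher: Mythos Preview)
Your proposal contains a genuine direction error and a false structural claim, both of which need correction before the argument can go through.

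\textbf{Direction of the estimate.} To show $F(z,\psi_\varepsilon,D\psi_\varepsilon,\ddb\psi_\varepsilon)<0$, i.e.\ $-\sigma_k(\ddb\psi_\varepsilon)^{1/k}+f(\cdot)<0$, you need a \emph{lower} bound on $\sigma_k(\ddb\psi_\varepsilon)$ and an \emph{upper} bound on $f$. Your opening paragraph has this backwards (``estimating $\sigma_k$ from above, while $f$ is bounded below by $c_0>0$''), and your second paragraph explicitly derives an upper bound $\sigma_k(\ddb\psi_\varepsilon)^{1/k}\le C(r)^{1/k}M$, which is useless here. You eventually self-correct (``I need $\sigma_k(\ddb\psi_\varepsilon)^{1/k}>\sup f$'' and ``$\sigma_k(\ddb\psi_\varepsilon)^{1/k}\ge c(r)M$''), but the argument needs to be rewritten with the correct inequality from the start.

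\textbf{The claim $\sigma_{k-1}(\ddb h)=0$ is false.} For $h(z')=(\varepsilon^2+|z'|^2)^\alpha$ on $\C^{k-1}$ one computes
\[
\ddb h=\alpha(\varepsilon^2+|z'|^2)^{\alpha-1}\Bigl(I_{k-1}+(\alpha-1)\tfrac{z'\otimes\bar z'}{\varepsilon^2+|z'|^2}\Bigr),
\]
whose eigenvalues are $\alpha(\varepsilon^2+|z'|^2)^{\alpha-1}$ (multiplicity $k-2$) and $\alpha(\varepsilon^2+|z'|^2)^{\alpha-2}(\varepsilon^2+\alpha|z'|^2)$. Hence $\sigma_{k-1}(\ddb h)=\det\ddb h=\alpha^{k-1}(\varepsilon^2+|z'|^2)^{(k-1)(\alpha-1)-1}(\varepsilon^2+\alpha|z'|^2)>0$; it does not vanish, even when $\varepsilon=0$. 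So the cancellation mechanism you invoke (``$\sigma_{k-1}(\ddb h)=0$ kills exactly the term without a $g$-factor'') does not exist. The actual computation requires the full $k\times k$ determinant, including the off-diagonal $\partial_{z_1}g\cdot\partial_{\bar z'}h$ cross terms: the paper carries this out directly and obtains
\[
\sigma_k(\ddb w_\varepsilon)=\alpha^k(r^2+|z_1|^2)^{k-2}\,\frac{r^2(\alpha^{-1}\varepsilon^2+|z'|^2)+\alpha^{-1}\varepsilon^2|z'|^2}{\varepsilon^2+|z'|^2}\ \ge\ \alpha^k r^{2(k-1)}
\]
for $\varepsilon<r$. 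The positivity of this lower bound is precisely the subsolution property; there is no degeneracy in $\sigma_k$ to exploit.

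\textbf{Handling the $M$-dependence.} The ``subtlety'' you flag is real and your resolution is vague. The paper does not try to absorb a linear-in-$M$ term on the $f$ side by choosing $M$ large. Instead it fixes $M=M(r)=2^{-\alpha-3/2}r^{-2\alpha-1}$ so that $|D\psi_\varepsilon|\le 1$ and $0\le\psi_\varepsilon<1$ in $B_r$; then $f(z,\psi_\varepsilon,D\psi_\varepsilon)\le\sup_{B_1\times B_1}f(z,1,p)$ by monotonicity in $u$, a bound \emph{independent of $M$}. Finally $r$ is chosen small so that this finite supremum is beaten by $\alpha^\alpha r^{2\alpha}M=\alpha^\alpha 2^{-\alpha-3/2}r^{-1}$.
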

\begin{proof}
First, $w_\varepsilon (z)$ is independent on $z''$ therefore $\partial \bar\partial w_\varepsilon (z)$ has $n-k$ null rows by construction and so
\begin{equation}\label{det}
\s_k(\partial \bar\partial w_\varepsilon (z))=\det \partial \bar\partial_{(z_1,z')} w_\varepsilon (z).
\end{equation}
Direct computations show that
\begin{equation}\label{eq:formulafordetofwsubsigma}
\det \partial \bar\partial_{(z_1,z')} w_\varepsilon (z)= f_\varepsilon(z)
\end{equation}
with
 \[
 f_\varepsilon(z)=
 \alpha^{k} (r^2+|z_1|^2)^{k-2}\; \frac{r^2(\alpha^{-1}\varepsilon^2 +|z'|^2)+\alpha^{-1}\varepsilon^2 |z'|^2}{(\varepsilon^2+ |z'|^2)}.
\]
Indeed, we have
\[
\begin{split}
\partial \bar\partial_{(z_1,z')} w_\varepsilon (z)
=&(\varepsilon^2+|z'|^2)^{\alpha-1}
\left(%
\begin{array}{ll}
\varepsilon^2+|z'|^2
&
\alpha \bar z_1 \; z'\\
\alpha z_1 \;(\bar z')^T
&
(r^2+|z_1|^2)\left(\alpha I_{k-1}+\alpha
(\alpha-1)\frac{z'\otimes \bar z'}{{\varepsilon^2+|z'|^2}}\right)
\end{array}%
\right)\\
\end{split}
\]
%sake of compactness in formulas.
Since $(\alpha-1)k=-1$, we get
\[
\begin{split}
&\det   \partial \bar\partial_{(z_1,z')} w_\varepsilon (z)=\\
=&(\varepsilon^2+|z'|^2)^{-1}\det
\left(%
\begin{array}{ll}
\varepsilon^2+|z'|^2
&
\alpha \bar z_1 \; z'\\
\alpha z_1 \;(\bar z')^T
&
(r^2+|z_1|^2)\left(\alpha I_{k-1}+\alpha
(\alpha-1)\frac{z'\otimes \bar z'}{{\varepsilon^2+|z'|^2}}\right)
\end{array}%
\right)\\
=&\det
\left(%
\begin{array}{ll}
1
&
\alpha (\varepsilon^2+|x'|^2)^{-1/2} \; \bar z_1 \; z'\\
\alpha (\varepsilon^2+|x'|^2)^{-1/2} \; z_1 \;(\bar z')^T
&
(r^2+|z_1|^2)\left(\alpha I_{k-1}+\alpha
(\alpha-1)\frac{z'\otimes \bar z'}{{\varepsilon^2+|z'|^2}}\right)
\end{array}%
\right)\\
=&\det
\left(%
\begin{array}{ll}
1
&
0\\
\alpha (\varepsilon^2+|x'|^2)^{-1/2} \; z_1 \;(\bar z')^T
&
(r^2+|z_1|^2)\left(\alpha I_{k-1}+\alpha
(\alpha-1)\frac{z'\otimes \bar z'}{{\varepsilon^2+|z'|^2}}\right)-\alpha^2 |z_1|^2 \frac{z'\otimes \bar z'}{{\varepsilon^2+|z'|^2}}
\end{array}%
\right)\\
=&\det \left(%
\begin{array}{l}
(r^2+|z_1|^2)\left(\alpha I_{k-1}+\alpha
(\alpha-1)\frac{z'\otimes \bar z'}{{\varepsilon^2+|z'|^2}}\right)-\alpha^2 |z_1|^2 \frac{z'\otimes \bar z'}{{\varepsilon^2+|z'|^2}}
\end{array}%
\right)\\
:=&\det \Gamma,\\
\end{split}
\]
where $\Gamma$ is a $(k-1) \times (k-1)$ symmetric matrix. It is easy to see that $\lambda_1=\alpha (r^2+|z_1|^2)$ is an eigenvalue of $\Gamma$ with multiplicity $k-2.$ Now, $\trace \, \Gamma
=(k-2)\lambda_1+\lambda_2$ with
\[
\begin{split}
\lambda_2=&(r^2+|z_1|^2)\left(\alpha+\alpha
(\alpha-1)\frac{|z'|^2}{{\varepsilon^2+|z'|^2}}\right)-\alpha^2|z_1|^2\frac{|z'|^2}{{\varepsilon^2+|z'|^2}}\\
=&\alpha^2\,
\frac{r^2\left(\frac{\varepsilon^2}{\alpha}+|z'|^2\right)+\frac{\varepsilon^2}{\alpha}|z'|^2}{{\varepsilon^2+|z'|^2}}
\end{split}
\]
Thus
$$\det \Gamma =\lambda_1^{k-2}\lambda_2=\alpha^{k} (r^2+|z_1|^2)^{k-2}\frac{r^2\left(\frac{\varepsilon^2}{\alpha}+|z'|^2\right)+
\frac{\varepsilon^2}{\alpha}|z'|^2}{{\varepsilon^2+|z'|^2}}=f_\varepsilon$$
which completes the proof of \eqref{eq:formulafordetofwsubsigma}.
In particular, if $\varepsilon < r$
$$f_\varepsilon\geq {\alpha^{k} r^{2(k-1)}}\geq \frac{\alpha^{k-1} r^{2 (k-1)}}{2}$$
Since $\psi_{\varepsilon} =  M w_{\varepsilon}$, by \eqref{det}, we can choose $r$ small, such that
$$
\sigma_k\left( \ddb \psi_{\varepsilon}\right)^{1/k}> \alpha^\alpha  r^{2\alpha}M.
$$
On the other side, direct computations show that
\[
 \begin{split}
 |D w_{\varepsilon}|^2 =
 4\Big( &|z_1|^2( \varepsilon^2 + |z'|^2)^{2 \alpha}\,
 + \, \alpha^2 |z'|^2 ( r^2 + |z_1|^2)^2
 ( \varepsilon^2 +|z'|^2)^{2 (\alpha - 1)}\Big)
 \end{split}
\]
and for every $\varepsilon \in ]0, r[$,
\begin{equation}\label{boundgradient}
|D w_{\varepsilon}|^2 \leq 2^{2 \alpha + 3} r^{4 \alpha + 2}  \quad
\mbox{in} \,\, B_r.
\end{equation}
From  \eqref{boundgradient}, we obtain
\begin{equation}\label{loverbound}
|D \psi_{\varepsilon}| \leq 2^{ \alpha + 3/2} M r^{2 \alpha + 1}  \quad
\mbox{in} \,\, B_r.
\end{equation}
Choosing $ M = 2^{- \alpha-(3/2)} r^{-2 \alpha -1}$,
the right hand side of (\ref{loverbound}) equals $1,$
and $ \psi_{\varepsilon}\leq 2^{-1/2}r<1.$
The strategy now is to take $r$ such that
\[
\sup_{(z,p)\in B_1\times B_1} f(z,1, p)<  \alpha^\alpha 2^{-\alpha-(3/2)} r^{ -1}.
\]
Then, by the increasing monotonicity of $s\rightarrow  f(\cdot, s, \cdot),$    in $B_r$ we obtain
\[
 \begin{split}
 F(z, \psi_{\varepsilon} , D \psi_{\varepsilon}, \ddb \psi_{\varepsilon}) &  <-\alpha^{\alpha}2^{-\alpha-(3/2)} r^{ -1}+
 f(z, \psi_{\varepsilon} , D \psi_{\varepsilon})\\
 &<
 -\alpha^{\alpha}2^{-\alpha-(3/2)} r^{ -1}+ f(z,1, D \psi_{\varepsilon})  <0.
\end{split}
\]
\end{proof}

\begin{proof}[Proof of Theorem \ref{main}]
We have
$$
 \psi_0 \leq \psi_{\varepsilon} \leq \phi_{\varepsilon} , \quad \mbox{in}\,\, B_1.
$$
Since $ k\geq2$, the exponent $\alpha\geq\frac{1}{2}$ and so $\phi_{\varepsilon}$ is convex for $\varepsilon\geq 0.$

Moreover, $\phi_{\varepsilon}$ is smooth for $\varepsilon>0$,
and independent of $z_1$ and $z''$, therefore $\partial \bar \partial \phi_\varepsilon$ has $n-k+1$ null eigenvalues.
Therefore:
\begin{equation}\label{supersolution}
F(z, \phi_{\varepsilon}, D \phi_{\varepsilon}, \partial \bar \partial\phi_{\varepsilon}) = f(z, \phi_{\varepsilon}, D \phi_{\varepsilon}) > 0 \quad\mbox{in}\,\, B_1, \quad \forall \,\varepsilon \in ]0,1[.
\end{equation}
Thus, applying Proposition \ref{gradientestimates1}, there exists $ 0 < r< r_0$ such that the Dirichlet problem
$$
F= 0 \quad \mbox{in}\,\, B_r, \qquad u = \phi_{\varepsilon}\quad \mbox{on}\,\, \partial B_r,
$$
with $\varepsilon \in ]0,1[$, has a viscosity solution $u_{\varepsilon}$ such that
$$
  \| u_{\varepsilon} \parallel_{L^{\infty}( \overline{B}_r)}\, + \, \parallel u_{\varepsilon} \parallel_{Lip\, (\overline{B}_r)}\,\, \leq \,\, C(r,\varepsilon, M)
$$
with $ C(r,\varepsilon, M)$ depending on $\varepsilon$ only through
$$ C(\phi_{\varepsilon}):= \parallel \phi_{\varepsilon} \parallel_{L^{\infty}( \overline{B}_r)}\, + \, \parallel D \phi_{\varepsilon} \parallel_{L^{\infty} (\overline{B}_r)}.$$
On the other hand, an elementary computation shows that $C(\phi_{\varepsilon}) \leq 8 M $. Then, we can choose $C(r,\varepsilon, M)$ independent of $\varepsilon$, and so
\begin{equation}\label{uniformestimate}
 \parallel u_{\varepsilon} \parallel_{L^{\infty}( \overline{B}_r)}\, + \, \parallel u_{\varepsilon} \parallel_{Lip\, (\overline{B}_r)}\,\, \leq \,\, C(r, M ).
\end{equation}
Now we can use the Comparison Principle to compare $u_{\varepsilon}$
with $\psi_{\varepsilon}$ and $ \phi_{\varepsilon}$. Indeed, if $\varepsilon<r$, by
(\ref{supersolution}) and Lemma \ref{lemma1}, $\phi_{\varepsilon}$ and
$\psi_{\varepsilon}$ are, respectively, classical super-solution and
sub-solution to $F= 0$ in $B_r$. On the other hand
$\psi_{\varepsilon} \leq \phi_{\varepsilon}$ in $B_1$, in
particular, $\psi_{\varepsilon} \leq \phi_{\varepsilon}$ on $\partial B_r$. Thus, by the Comparison Principle,
\begin{equation}\label{comparison}
 \psi_{\varepsilon} \leq u_{\varepsilon} \leq \phi_{\varepsilon} \quad \mbox{in}\,\, B_r, \,\,\, \forall \varepsilon \in ]0, r[.
\end{equation}
The uniform estimate (\ref{uniformestimate}) implies the existence of a sequence $ \varepsilon_j \searrow 0$ such that $(u_{\varepsilon_j})_{j \in \N}$ uniformly converges to a viscosity solution $ u \in Lip(\overline{B}_r)$ to the Dirichlet problem
$$
F= 0 \quad \mbox{in}\,\, B_r, \quad u = \phi_{0}\,\,\mbox{on}\,\,\partial B_r.
$$
Moreover, from the comparison principle, we get
\begin{equation}\label{key0}
 \psi_0 \leq u \leq \phi_0 \quad \mbox{in}\,\, B_r.
\end{equation}
So
\begin{equation}\label{key}
 M r^2 |z_1|^{2\alpha} \leq u(z_1,0,\dots,0) \leq 2 M |z_1|^{2\alpha}.
\end{equation}
and in particular
\begin{equation}\label{keyx}
 M r^2 |x_1|^{2\alpha} \leq u((x_1,0),0,\dots,0) \leq 2 M |x_1|^{2\alpha}.
\end{equation}
As in the proof of \cite[Theorem 1]{GLM2013}
inequalities   in \eqref{keyx} imply, if $k>2$:
$$\p_{x_1} u \notin C^{\beta}, \,\, \mbox{for every}\,\,\beta >  2 \alpha - 1 =1-\frac{2}{k}\quad \mbox{if}\,\,  2 \alpha > 1\,\,\,(\mbox{i.e.}\,\, k >2).
$$
Indeed, if $ 2\alpha > 1$, then $\partial_{x_1} u(0 )= 0 = u (0)$ so that, if $ u$ was $C^{1,\beta}$, with $\beta>2\alpha-1$, we would have $ u ((x_1,0),0,\dots,0) \leq C |x_1|^{1+\beta}$
for a suitable $C>0$ and for every $x_1$ sufficiently small. Hence, by the first inequality in (\ref{keyx}), we would have $\beta \leq 2 \alpha - 1 $, a contradiction.\\
If $k=2$, in the same way, we see that $\p_{x_1} u$ is not continuous 
and this ends the proof.
\end{proof}

\section*{Acknowledgements}

Vittorio Martino and Annamaria Montanari are members of the {\it Gruppo Nazionale per
l'Analisi Matematica, la Probabilit\`a e le loro Applicazioni} (GNAMPA)
of the {\it Istituto Nazionale di Alta Matematica} (INdAM).

\end{document}